\definecolor{enlaces}{rgb}{0.6,0.6,0.9}
\newtheorem{theorem}{Theorem}[section]
\newtheorem{proposition}[theorem]{Proposition}
\theoremstyle{definition}
\newtheorem{example}[theorem]{Example}
\newtheorem{remark}[theorem]{Remark}
\DeclarePairedDelimiter\modulus{\lvert}{\rvert}
\let\oldmodulus\modulus
\def\modulus{\@ifstar{\oldmodulus}{\oldmodulus*}}
\DeclarePairedDelimiter\norm{\lVert}{\rVert}
\let\oldnorm\norm
\def\norm{\@ifstar{\oldnorm}{\oldnorm*}}
\begin{document}
\title{Sharpness in Bohr's Inequality}
\author{Mario Guillén\thanks{Universitat Politècnica de València. cmno Vera sn 46022, València. Spain. mguisan3@posgrado.upv.es } 
\and 
Pablo Sevilla-Peris\thanks{I.U.Matemàtica Pura i Aplicada. Universitat Politècnica de València. cmno Vera sn 46022, València. Spain. psevilla@mat.upv.es\\
Research partially supported by grant PID2021-122126NB-C33 funded by \\ MICIU/AEI/10.13039/501100011033 and by ERDF/EU}}
\date{}

\maketitle

\begin{abstract}
We make a careful analysis of Bohr's inequality, in the line started by Kayumov and Ponnusamy, where some extra summand (depending on the function) is added in the right-hand side of the inequality. We analyse the inequality when  smaller radius are taken, giving sharp constants. As a result of this point of view, some previous results are improved. 
\end{abstract}

\renewcommand{\thefootnote}{\fnsymbol{footnote}} 
\footnotetext{\emph{Keywords} Bounded analytic function; Bohr radius}     
\footnotetext{\emph{MSC 2020 classification} 30A10; 30C35; 30B10 }     
\renewcommand{\thefootnote}{\arabic{footnote}}

\section{Introduction}

Harald Bohr proved in \cite{bohr} the following celebrated result for holomorphic functions on $ \mathbb{D}$ (the open unit disc of the complex plane), now commonly known as \textit{Bohr's inequality}
\begin{theorem} \label{th:bohr} 
If $ f(z)= \sum_{n=0}^{\infty}a_{n}z^n$ is holomorphic and  $ |f(z)|\leq 1$ for every $ z \in \mathbb{D}$, then 
\begin{equation} \label{eq:bohr}
    \sum_{n=0}^{\infty}|a_{n}|r^n \leq 1, \, \text{ for every } r \leq \frac{1}{3}
\end{equation} 
and the radius $ 1/3$ is sharp.
\end{theorem}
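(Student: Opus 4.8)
The plan is to reduce the statement to one sharp coefficient estimate together with an elementary one--variable optimisation. Put $a=|a_{0}|$. If $a=1$ then by the maximum modulus principle $f$ is the constant $a_{0}$ and \eqref{eq:bohr} is immediate, so assume $0\le a<1$. The first step is to prove that $|a_{n}|\le 1-a^{2}$ for every $n\ge 1$. For fixed $n\ge 1$, set $\omega=e^{2\pi i/n}$ and average $f$ over the $n$th roots of unity,
\[
g(z)=\frac1n\sum_{k=0}^{n-1}f(\omega^{k}z),
\]
which again maps $\mathbb{D}$ into $\overline{\mathbb{D}}$. Since this averaging keeps exactly the Taylor coefficients whose index is a multiple of $n$, we have $g(z)=G(z^{n})$ for a holomorphic $G\colon\mathbb{D}\to\overline{\mathbb{D}}$ with $G(0)=a_{0}$ and $G'(0)=a_{n}$. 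The Schwarz--Pick inequality at the origin (obtained by applying the Schwarz lemma to $\varphi_{a_{0}}\circ G$, where $\varphi_{a_{0}}(w)=\frac{w-a_{0}}{1-\overline{a_{0}}w}$ is the automorphism of $\mathbb{D}$ with $\varphi_{a_{0}}(a_{0})=0$) gives $|G'(0)|\le 1-|G(0)|^{2}$, that is $|a_{n}|\le 1-a^{2}$.

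Next, summing the geometric series, for $0\le r<1$,
\[
\sum_{n=0}^{\infty}|a_{n}|r^{n}\le a+(1-a^{2})\sum_{n=1}^{\infty}r^{n}=a+(1-a^{2})\frac{r}{1-r},
\]
so it is enough to check that $a+(1-a^{2})\frac{r}{1-r}\le 1$ for all $a\in[0,1)$ whenever $r\le 1/3$. Moving $a$ to the right--hand side and dividing by $1-a>0$, this is equivalent to $(1+a)\,r\le 1-r$; the left side is increasing in $a$, so the condition over all $a\in[0,1)$ amounts to its limiting form $2r\le 1-r$, i.e.\ $r\le 1/3$. This proves \eqref{eq:bohr}.

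For sharpness I would test the Möbius maps $f_{a}(z)=\dfrac{a-z}{1-az}$ with $0<a<1$, each of which maps $\mathbb{D}$ onto $\mathbb{D}$. Expanding the geometric series yields $f_{a}(z)=a-(1-a^{2})\sum_{n=1}^{\infty}a^{n-1}z^{n}$, so
\[
\sum_{n=0}^{\infty}|a_{n}(f_{a})|\,r^{n}=a+(1-a^{2})\frac{r}{1-ar},
\]
and a short manipulation shows this is $>1$ precisely when $r(1+2a)>1$. Hence, for any $r>1/3$, choosing $a<1$ close enough to $1$ that $1+2a>1/r$ gives a function whose Bohr sum exceeds $1$; so $1/3$ cannot be replaced by any larger radius, which is the claimed sharpness.

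The only non--elementary ingredient is the coefficient bound $|a_{n}|\le 1-a^{2}$; granted this, the proof of \eqref{eq:bohr} is just a geometric series plus a monotone one--variable inequality, and the sharpness part is an explicit computation. The step needing care is the roots--of--unity averaging, which is precisely what lets one deduce the bound on the general coefficient $a_{n}$ from the origin derivative estimate $|G'(0)|\le 1-|G(0)|^{2}$, after which the Schwarz lemma finishes the job.
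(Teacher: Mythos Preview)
The paper does not actually prove Theorem~\ref{th:bohr}; it is stated as Bohr's classical result and attributed to \cite{bohr}, with no argument given. So there is no ``paper's own proof'' to compare against.

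That said, your proof is correct and self-contained. The coefficient bound $|a_{n}|\le 1-|a_{0}|^{2}$ that you obtain via roots-of-unity averaging plus Schwarz--Pick is exactly the estimate the paper later \emph{quotes} (from \cite{bohr} and \cite[Lemma~8.4]{libro}) in the proof of Proposition~\ref{prop:necessary}; and your sharpness family $f_{a}(z)=\frac{a-z}{1-az}$ is, up to sign, the M\"obius map $\phi_{a}$ of \eqref{moebius}, whose Bohr sum the paper computes in \eqref{murdoch} (giving $a+(1-a^{2})\frac{r}{1-ar}$, the same expression you obtain). Thus, while the paper offers nothing to compare directly, your argument is built from precisely the ingredients the paper takes as known background.
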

This inequality has been carefully analysed and extended under different points of view. One way to look at this is to wonder if there is some room left in the left-hand side of the inequality. To put it in slightly more precise terms, if some extra term can be added, so that the inequality still holds true. A quick thought with the constant function $ 1$ immediately shows that there is no hope to find a term that is valid \textit{for every function}. However, thinking again in constant functions (this time with modulus strictly smaller than $ 1$), shows that for each particular function there is room for an extra factor. Then, the (for the moment vague) question is:  for each particular function $f$, what can be added in the left-hand side of \eqref{eq:bohr} (which necessarily has to depend on $f$) so that the inequality still holds. The first steps in this direction were given in \cite{first} and \cite{second}, where, for each $f$ and $0<r<1$, the following quantity is considered 
\begin{equation} \label{Sr}
S_{r}=S_{r}(f) \coloneqq  \int_{r\mathbb{D}}|f^{\prime}(w)|^2 \, \mathrm{d}A(w) 
= \pi \sum_{n=1}^{\infty}n \modulus{a_{n}}^{2}r^{2n} \,,
\end{equation}
(note that if $f$ is a univalent function, then this coincides with the area of $ f(r\mathbb{D})$). Then in \cite[Theorem~10]{second} the following result is given.
\begin{theorem}\label{th:second} 
 If $ f(z)= \sum_{n=0}^{\infty}a_{n}z^n$ is holomorphic and  $ |f(z)|\leq 1$ for every $ z \in \mathbb{D}$, then 
    \begin{equation*}
        \sum_{n=0}^{\infty}|a_{n}|r^n + \frac{16}{9}\left( \frac{S_{r}}{\pi-S_{r}} \right) \leq 1  \,,
    \end{equation*} 
for every $r \leq 1/3$. Moreover the constants $1/3$ and $ 16/9$ are sharp.
\end{theorem}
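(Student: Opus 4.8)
The plan is to bound each of the two pieces on the left by its extremal (disc-automorphism) value, to reduce the resulting inequality to $r=\tfrac13$ by monotonicity, and then to check a one-variable polynomial inequality. After a rotation we may assume $a_0=a\in[0,1]$; if $a=1$ the maximum principle forces $f$ to be a unimodular constant, so $S_r=0$ and the statement is an equality, hence assume $a\in[0,1)$. The two ingredients are the refined Bohr estimate
\[
\sum_{n=1}^{\infty}|a_n|\,r^n\ \le\ (1-a^2)\,\frac{r}{1-ar}, \qquad r\le\tfrac13,
\]
which is strictly sharper than the classical $(1-a^2)\tfrac{r}{1-r}$ and is attained by the disc automorphisms, and the coefficient estimate
\[
\frac{S_r}{\pi}\ =\ \sum_{n=1}^{\infty}n\,|a_n|^2 r^{2n}\ \le\ (1-a^2)^2\,\frac{r^2}{(1-r^2)^2},
\]
which follows at once from the classical bound $|a_n|\le 1-a^2$ $(n\ge1)$; in particular $S_r/\pi\le 9/64<1$ for $r\le\tfrac13$, so $S_r/(\pi-S_r)$ is well defined and, being increasing in $S_r$, is dominated by the corresponding expression built from the right-hand side above.

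For the refined Bohr estimate I would use the Schwarz function $\omega\coloneqq(f-a)/(1-af)$, holomorphic on $\mathbb{D}$, mapping $\mathbb{D}$ into $\mathbb{D}$, with $\omega(0)=0$. From $f=(a+\omega)/(1+a\omega)$ one gets $f-a=(1-a^2)\sum_{k\ge1}(-a)^{k-1}\omega^k$, hence $|a_n|\le(1-a^2)\sum_{k\ge1}a^{k-1}\,|[\omega^k]_n|$ for $n\ge1$. Writing $\omega=z\psi$ with $\psi$ holomorphic and $\|\psi\|_\infty\le1$ (Schwarz lemma), one has $[\omega^k]_n=[\psi^k]_{n-k}$, and since each $\psi^k$ is holomorphic and bounded by $1$, Theorem~\ref{th:bohr} applied to $\psi^k$ gives $\sum_{m\ge0}|[\psi^k]_m|r^m\le1$ for $r\le\tfrac13$; summing over $n$ and then over $k$ (all terms nonnegative) yields $\sum_{n\ge1}|a_n|r^n\le(1-a^2)\sum_{k\ge1}a^{k-1}r^k=(1-a^2)\tfrac{r}{1-ar}$. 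I expect this refinement to be the main point: a short computation letting $a\to1$ shows that the classical estimate $(1-a^2)\tfrac{r}{1-r}$ leaves too little room in \eqref{eq:bohr} to absorb the area term, whereas the sharper denominator $1-ar$ leaves exactly enough.

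Combining the two estimates, the left-hand side of the asserted inequality is at most
\[
\Phi(a,r)\ \coloneqq\ a+(1-a^2)\frac{r}{1-ar}+\frac{16}{9}\cdot\frac{\,(1-a^2)^2\,\dfrac{r^2}{(1-r^2)^2}\,}{\,1-(1-a^2)^2\,\dfrac{r^2}{(1-r^2)^2}\,}.
\]
For fixed $a$, each of $r\mapsto\tfrac{r}{1-ar}$ and $r\mapsto\tfrac{r^2}{(1-r^2)^2}$ is increasing on $(0,1)$, and $x\mapsto x/(1-x)$ is increasing on $[0,1)$, so $\Phi(a,\cdot)$ is increasing on $(0,\tfrac13]$; it therefore suffices to prove $\Phi(a,\tfrac13)\le1$ for all $a\in[0,1)$. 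Putting $r=\tfrac13$, using $1-a-(1-a^2)\tfrac{1}{3-a}=\tfrac{2(1-a)^2}{3-a}$, clearing the positive denominators and cancelling the common factor $(1-a)^2$, the inequality $\Phi(a,\tfrac13)\le1$ reduces to
\[
31-40a+10a^2+8a^3-9a^4\ \ge\ 0\qquad(0\le a\le1),
\]
which one checks by factoring the left-hand side as $(1-a)(9a^3+a^2-9a+31)$ and noting the cubic is positive on $[0,1]$ (its unique critical point there, near $a\approx0.54$, has value well above $0$). This proves the inequality for all $r\le\tfrac13$.

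It remains to prove sharpness. The radius $\tfrac13$ is already optimal in \eqref{eq:bohr} by Theorem~\ref{th:bohr}, and since $S_r/(\pi-S_r)\ge0$, any $f$ violating \eqref{eq:bohr} at some $r>\tfrac13$ violates the present inequality there as well. For the constant $16/9$ I would test the disc automorphisms $f_a(z)=(a-z)/(1-az)$, for which $a_n=-(1-a^2)a^{n-1}$ $(n\ge1)$; a direct computation gives
\[
1-\sum_{n}|a_n|3^{-n}=\frac{2(1-a)^2}{3-a},\qquad \frac{16}{9}\cdot\frac{S_{1/3}}{\pi-S_{1/3}}=\frac{2(1-a^2)^2}{9-a^4},
\]
both of which are $\sim(1-a)^2$ as $a\to1$. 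Hence for any $C>16/9$ one has $\sum_{n}|a_n|3^{-n}+C\cdot\tfrac{S_{1/3}}{\pi-S_{1/3}}=1+\big(\tfrac{9C}{16}-1\big)(1-a)^2+o\big((1-a)^2\big)>1$ once $a$ is close enough to $1$, so no constant larger than $16/9$ is admissible, and along the same family the inequality is asymptotically an equality.
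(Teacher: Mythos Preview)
Your proof is correct and takes a more elementary route than the paper's. The paper does not prove Theorem~\ref{th:second} directly (it is cited from \cite{second}), but recovers it as the case $R=1/3$ of Theorem~\ref{th:second_improved}, whose proof goes through the abstract reduction of Theorem~\ref{th:ch_mobius} and relies on two external ingredients: the sharp area estimate $S_r/\pi\le r^2(1-|a_0|^2)^2/(1-|a_0|^2r^2)^2$ from \cite[Lemma~1]{first}, and the refined Bohr bound \eqref{blueraincoat} from \cite[Lemma~3]{second} (with its case split according to whether $|a_0|\ge r$ or not). Your argument replaces both by self-contained pieces: you derive the refined Bohr bound $(1-a^2)r/(1-ar)$ for \emph{all} $a\in[0,1)$ directly from the Schwarz function and Bohr's theorem applied to the powers $\psi^k$ (thereby avoiding the case distinction entirely), and you use the cruder area bound $S_r/\pi\le (1-a^2)^2r^2/(1-r^2)^2$ coming from $|a_n|\le 1-a^2$. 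The cruder bound still suffices because your quartic $31-40a+10a^2+8a^3-9a^4=(1-a)(9a^3+a^2-9a+31)$ vanishes exactly at $a=1$, matching the asymptotic tightness needed for sharpness. What the paper's approach buys is generality: the sharper area bound with denominator $(1-|a_0|^2r^2)^2$ is what allows the machinery to identify $\Lambda(R)$ for every $R\le 1/3$, whereas your weaker bound would not give the exact constant for $R<1/3$. What your approach buys is a short, self-contained proof of the stated theorem using only Theorem~\ref{th:bohr} and the Schwarz lemma.
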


Here the fact that the constant $16/9$ is sharp means that it is the biggest constant so that the inequality holds for every holomorphic function on $ \mathbb{D}$ such that $ |f(z)|\leq 1$ for every $ z \in \mathbb{D}$ (the class of which we denote by $ \mathcal{B}$) and every $0 < r \leq 1/3$.  Our first aim is to give a closer look at this inequality, looking, for each fixed radius $0<R<1/3 $, for the best constant $\Lambda(R)$ so that, for each $ f(z)= \sum_{n=0}^{\infty}a_{n}z^n \in  \mathcal{B}$ we have
\begin{equation} \label{pregunta}
    \sum_{n=0}^{\infty}|a_{n}|r^n + \Lambda(R)\left( \frac{S_{r}}{\pi-S_{r}} \right) \leq 1, \, \text{ for every } r \leq R \,.
\end{equation} 
Clearly $\Lambda(R) \geq 16/9$.\\

To tackle this question we take a general point of view, considering arbitrary functions depending on $f$ and $r$. Given $\varphi \colon \mathcal{B} \times [0, 1/3] \to [0,\infty)$, we look for constants $\lambda_{R}$ so that for each $f \in \mathcal{B} $ we have
\begin{equation} \label{biber}
  \sum_{n=0}^{\infty}|a_{n}|r^n + \lambda_{R} \varphi (f, r)\leq 1, 
\end{equation}
for every $0 \leq r \leq R$. With this in mind, the \textit{sharp constant} of $\varphi $ for $0 \leq R \leq 1/3$ is defined as
\[
\Lambda_{\varphi}(R) \coloneqq \sup\left\{\lambda_{R}\geq 0: \sum_{n=0}^{\infty} |a_n|r^{n}+\lambda_{R}\varphi(f,r)\leq 1, \,\text{ for all } f \in \mathcal{B} \text{ and } 0 \leq r \leq R \right\} \,.
\]
Whenever there is no risk of confusion we will simply write  $\Lambda(R)$. So, our main aim from now on is to give, for a given function $\varphi$, a reasonable description of $\Lambda_{\varphi}(R)$. \\

In the definition (and other characterisations, see Proposition~\ref{th:ch} below), all functions in $\mathcal{B} $ are taken into account. We will see that, under certain mild conditions on $\varphi $ (see Theorem~\ref{th:ch_mobius}) it is enough to consider functions of the form 
\begin{equation} \label{moebius}
\phi_{a}(z)=\frac{z-a}{1-az}
\end{equation}
for $ 0\leq  a \leq  1$ (note that, for $0\leq a <1 $ these are Möbius transformations and $\phi_{1}$ is the constant function $-1 $). This makes the problem far easier to handle, and in certain cases will allow us to give an explicit formula for $\Lambda_{\varphi}(R) $.\\

Before we proceed, let us note that, defining $\Upsilon \colon \mathcal{B} \times [0, 1/3] \to [0,\infty)$
\begin{equation}\label{upsilon}
\Upsilon(f,r)\coloneqq 1-\sum_{n=0}^{\infty}|a_{n}|r^n
\end{equation}
the sharp constant of a function $\varphi $ can be rewritten as
\begin{equation} \label{pogacar}
\Lambda(R)
=\sup\left\{\lambda_{R}\geq 0: \lambda_{R}\leq \frac{\Upsilon(f,r)}{\varphi(f,r)}, \,\text{ for all } f \in \mathcal{B} \text{ and } 0 \leq r \leq R \right\} \,.
\end{equation}

\section{Sharp constants}

As a straightforward consequence of the reformulation of $\Lambda (R) $ given in \eqref{pogacar} we have the following result, which combined with Proposition~\ref{prop:feasible_constant} gives a way to check if a given function is feasible.

\begin{proposition} \label{th:ch}
The sharp constant of a function $ \varphi$ is given by
\begin{equation*}
\Lambda(R)=\inf_{\mathcal{B}\times[0,R]}\frac{\Upsilon(f,r)}{\varphi(f,r)} \,.
\end{equation*}
\end{proposition}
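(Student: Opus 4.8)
The plan is to unwind the definition of $\Lambda(R)$ given via \eqref{pogacar} and match it with the infimum of the ratio $\Upsilon(f,r)/\varphi(f,r)$ over $\mathcal{B}\times[0,R]$. The only content is the elementary fact that the supremum of all lower bounds of a set of nonnegative reals equals its infimum, together with a small amount of care about what happens when $\varphi(f,r)=0$ or when $\Upsilon(f,r)$ and $\varphi(f,r)$ need not both be strictly positive.

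First I would start from \eqref{pogacar}: $\Lambda(R)$ is the supremum of those $\lambda_R\geq 0$ satisfying $\lambda_R\leq \Upsilon(f,r)/\varphi(f,r)$ for every $f\in\mathcal{B}$ and every $0\leq r\leq R$ — with the natural convention (implicit already in \eqref{biber}, since $\lambda_R\varphi(f,r)=0$ imposes no constraint when $\varphi(f,r)=0$, and recalling $\Upsilon(f,r)\geq 0$ by Bohr's inequality, Theorem~\ref{th:bohr}, for $r\leq R\leq 1/3$) that the quotient is read as $+\infty$ whenever $\varphi(f,r)=0$. Thus the condition ``$\lambda_R$ is a lower bound for $\{\Upsilon(f,r)/\varphi(f,r): f\in\mathcal{B},\,0\leq r\leq R\}$'' is exactly the condition appearing inside the supremum in \eqref{pogacar}. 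Denote that set of quotients by $Q$; it is a nonempty subset of $[0,\infty]$, and it is bounded below by $0$.

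Next I would invoke the order-theoretic identity $\sup\{\lambda\geq 0: \lambda\leq q \text{ for all } q\in Q\}=\inf Q$, valid for any nonempty $Q\subseteq[0,\infty]$ bounded below by $0$: every $\lambda$ in the left-hand set is $\leq\inf Q$, so the left-hand supremum is $\leq\inf Q$; conversely $\inf Q$ itself (when finite; and the statement is vacuous/consistent when $\inf Q=+\infty$, i.e. $\varphi\equiv 0$) is a lower bound for $Q$ lying in $[0,\infty)$, hence belongs to the left-hand set, so the supremum is $\geq\inf Q$. Combining, $\Lambda(R)=\inf Q=\inf_{\mathcal{B}\times[0,R]}\Upsilon(f,r)/\varphi(f,r)$, which is the claim.

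I do not expect any genuine obstacle here; the statement is essentially a reformulation. The only point requiring a sentence of justification is the bookkeeping around the convention $\Upsilon/\varphi=+\infty$ when $\varphi=0$ and the nonnegativity of $\Upsilon$ on $[0,R]$ (so that $0$ is always a legitimate choice of $\lambda_R$ and the supremum is over a nonempty set), together with the degenerate case $\varphi\equiv 0$ where both sides are $+\infty$; everything else is the standard ``sup of lower bounds equals inf'' argument.
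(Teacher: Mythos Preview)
Your proposal is correct and follows exactly the paper's approach: the paper states the proposition as ``a straightforward consequence of the reformulation of $\Lambda(R)$ given in \eqref{pogacar}'' and provides no further argument, so your unwinding of that reformulation via the elementary ``sup of lower bounds equals inf'' identity is precisely what is intended. If anything, you are more careful than the paper about the conventions when $\varphi(f,r)=0$ and the degenerate case $\varphi\equiv 0$.
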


Note that the supremum in the definition of $\Lambda(R) $ is actually a maximum. In other words, 
\begin{equation*}
    \sum_{n=0}^{\infty} \modulus{a_n}r^{n}+\Lambda(R)\varphi(f,r)\leq 1 \,,
\end{equation*}
for every holomorphic function $f$ with $\vert f(z)  \vert \leq 1$ for all $z \in \mathbb{D} $ and every $0\leq r \leq R $.

This expression is difficult to compute because it involves an infimum over $\mathcal{B} $ (which in the end is the unit ball of the Banach space $H_{\infty} (\mathbb{D}) $ of bounded holomorphic functions on the disk). Our next step is to show that, under certain conditions on $\varphi $, it is enough to consider the functions $\phi_{a} $ from \eqref{moebius}. This gives an expression that will be much easier to deal with. \\

Let us note that simple computations yield
\begin{equation} \label{manzano}
\phi_{a}(z)=-a+\sum_{n=1}^{\infty}(1-a^2)a^{n-1}z^n
\end{equation}
for each $0\leq a \leq 1 $. This allows us to give a precise formula for $\Upsilon(\phi_{a},r) $: if $0\leq a \leq 1 $ and $0 \leq r \leq 1/3 $, then
\begin{equation} \label{murdoch}
\begin{split}
\Upsilon(\phi_{a},r)
=1-\sum_{n=0}^{\infty} \modulus{a_n}r^{n}=(1-c)-& \sum_{n=1}^{\infty}a_n r^{n}=(1-a)-(\varphi_{a}(r)+c) \\
& =1-2a-\frac{r-a}{1-ar}=1-a-r \frac{1-a^2}{1-ar} \,.
\end{split}
\end{equation}
On the other hand, from \cite[Lemma~3]{second} we know that if $f \in \mathcal{B}$, then
\begin{equation} \label{blueraincoat}
\sum_{k=1}^{\infty}\modulus{a_{n}}r^{n}\leq 
\left\{ 
\begin{aligned}
r \dfrac{1-\modulus{a_{0}}^{2}}{1-r\modulus{a_{0}}} & \text{ if } \modulus{a_{0}}\geq r,\\ 
r \dfrac{\sqrt{1-\modulus{a_{0}}^{2}}}{\sqrt{1-r}} & \text{ if } \modulus{a_{0}}< r.
\end{aligned}\right.
\end{equation}
(just take $p=1 $ and $m=0$ in the statement).

\begin{theorem}\label{th:ch_mobius}
Let $ \varphi$ be a function such that the following conditions hold
\begin{enumerate}[(i)]
\item \label{c:1} $\varphi(f,r)\leq \varphi(\phi_{\vert a_{0} \vert },r)$  for each $0 \leq r \leq 1/3$ and for every $f \in   \mathcal{B}$
\item \label{c:2} $\displaystyle 
    \inf_{\genfrac{}{}{0pt}{}{0 \leq a \leq 1}{0 \leq s \leq R}} \frac{\Upsilon(\phi_{a},s)}{\varphi(\phi_{a},s)}
    \leq   \inf_{0 \leq s \leq R}\Bigg(  \inf_{0 \leq a \leq s} \bigg( 1-a-s\frac{\sqrt{1-a^2}}{\sqrt{1-s^2}} \bigg) \frac{1}{\varphi(\phi_{a},s)} \Bigg) $.
\end{enumerate}
Then, its sharp constant is given by
\begin{equation*}
\Lambda(R)=  \inf_{\genfrac{}{}{0pt}{}{0 \leq a \leq 1}{0 \leq r \leq R}} \frac{\Upsilon(\phi_{a},r)}{\varphi(\phi_{a},r)}.
\end{equation*}
\end{theorem}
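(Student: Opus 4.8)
The plan is to start from the reformulation $\Lambda(R)=\inf_{\mathcal{B}\times[0,R]}\Upsilon(f,r)/\varphi(f,r)$ of Proposition~\ref{th:ch} and to show that restricting the infimum to the one-parameter family $\{\phi_a:0\le a\le1\}$ does not change it. One inequality is immediate: each $\phi_a$ belongs to $\mathcal{B}$, so the infimum over the subfamily can only be larger, whence $\Lambda(R)\le M$, where $M:=\inf_{0\le a\le1,\,0\le s\le R}\Upsilon(\phi_a,s)/\varphi(\phi_a,s)$. The content is the reverse inequality $\Lambda(R)\ge M$, which by Proposition~\ref{th:ch} amounts to proving $\Upsilon(f,r)/\varphi(f,r)\ge M$ for every $f(z)=\sum_{n=0}^{\infty}a_nz^n\in\mathcal{B}$ and every $0\le r\le R$. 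If $\varphi(f,r)=0$ there is nothing to prove, since $\Upsilon(f,r)\ge0$ by Theorem~\ref{th:bohr} (as $r\le1/3$) and the ratio is then $+\infty$; so I assume $\varphi(f,r)>0$ and write $c=|a_0|$.

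By hypothesis~(i) we have $\varphi(f,r)\le\varphi(\phi_c,r)$ (in particular $\varphi(\phi_c,r)>0$), so it is enough to bound $\Upsilon(f,r)$ below by a non-negative quantity attached to a suitable $\phi_a$ and then divide. The estimate \eqref{blueraincoat} naturally splits the argument according to whether $c\ge r$ or $c<r$. In the first case, \eqref{blueraincoat} together with the explicit value \eqref{murdoch} gives at once $\Upsilon(f,r)=(1-c)-\sum_{n=1}^{\infty}|a_n|r^n\ge(1-c)-r\frac{1-c^2}{1-rc}=\Upsilon(\phi_c,r)$, and since $\Upsilon(\phi_c,r)\ge0$ by Theorem~\ref{th:bohr}, dividing yields $\Upsilon(f,r)/\varphi(f,r)\ge\Upsilon(\phi_c,r)/\varphi(\phi_c,r)\ge M$, the last step because $\phi_c$ is one of the competitors in $M$.

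The second case $c<r$ is the heart of the matter and the place where hypothesis~(ii) enters. Here \eqref{blueraincoat} only gives the weaker bound $\Upsilon(f,r)\ge1-c-r\frac{\sqrt{1-c^2}}{\sqrt{1-r^2}}$ — a quantity which is still non-negative (either by a direct elementary check for $r\le1/3$, or because \eqref{blueraincoat} is sharp and $\Upsilon\ge0$ on $\mathcal{B}$) but which is in general strictly below $\Upsilon(\phi_c,r)$, so a direct comparison with $\phi_c$ is no longer available. Dividing by $\varphi(f,r)\le\varphi(\phi_c,r)$ gives
\[
\frac{\Upsilon(f,r)}{\varphi(f,r)}\ \ge\ \Big(1-c-r\frac{\sqrt{1-c^2}}{\sqrt{1-r^2}}\Big)\frac{1}{\varphi(\phi_c,r)},
\]
and since $0\le c<r\le R$ the right-hand side is bounded below by $\inf_{0\le s\le R}\big(\inf_{0\le a\le s}(1-a-s\frac{\sqrt{1-a^2}}{\sqrt{1-s^2}})\frac{1}{\varphi(\phi_a,s)}\big)$, which by hypothesis~(ii) is in turn at least $M$. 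Putting the two cases together proves $\Upsilon(f,r)/\varphi(f,r)\ge M$ in all cases, hence the stated formula.

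I expect the main obstacle to be precisely this second case: the natural extremiser $\phi_{|a_0|}$ does not saturate the lower bound for $\Upsilon(f,r)$ when $|a_0|<r$, so one must feed the sharp coefficient/area estimate \eqref{blueraincoat} into the abstract hypothesis~(ii) rather than compare with a single Möbius map. The glue that makes the chain of inequalities close is the elementary identity $(1-as)^2-(1-a^2)(1-s^2)=(s-a)^2\ge0$, which gives $1-a-s\frac{\sqrt{1-a^2}}{\sqrt{1-s^2}}\le\Upsilon(\phi_a,s)$ and thereby certifies that the left-hand side of hypothesis~(ii) really does lower-bound all the quantities appearing on its right-hand side. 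Two routine points also need attention throughout: the convention handling $\varphi(f,r)=0$, and the non-negativity of the successive lower bounds for $\Upsilon$ (so that dividing preserves the inequalities), both of which are consequences of Bohr's inequality applied inside $\mathcal{B}$.
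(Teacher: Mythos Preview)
Your argument is correct and follows the same route as the paper's: one inequality is trivial, and for the other you bound $\Upsilon(f,r)$ from below via \eqref{blueraincoat}, split on whether $|a_0|\ge r$ or $|a_0|<r$, and use condition~\ref{c:1} in both cases together with condition~\ref{c:2} in the second. Your closing remark about the ``glue'' identity $(1-as)^2-(1-a^2)(1-s^2)=(s-a)^2$ is extraneous (and the direction it claims to certify is actually reversed: it shows each term on the right of~\ref{c:2} is \emph{at most} the corresponding $\Upsilon(\phi_a,s)/\varphi(\phi_a,s)$, which says nothing about the infimum $M$ lying below it), but this is side-commentary and the proof itself stands without it.
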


\begin{proof}
Fix $ 0\leq R\leq 1/3$. From Theorem~\ref{th:ch} we have
\begin{equation*}
    \Lambda(R)=\inf_{\genfrac{}{}{0pt}{}{f \in \mathcal{B}}{0 \leq r \leq R}}\frac{\Upsilon(f,r)}{\varphi(f,r)}\leq \inf_{\genfrac{}{}{0pt}{}{0 \leq a \leq 1}{0 \leq r \leq R}}\frac{\Upsilon(\phi_{a},r)}{\varphi(\phi_{a},r)} \,.
\end{equation*}
Therefore it is only left to prove that the converse inequality holds, and to do that is suffices to show that
\begin{equation} \label{endrivell}
    \sum_{n=0}^{\infty} \modulus{a_n}r^{n}+ \bigg( \inf_{\genfrac{}{}{0pt}{}{0 \leq a \leq 1}{0 \leq s\leq R}}\frac{\Upsilon(\phi_{a},s)}{\varphi(\phi_{a},s)} \bigg) \varphi(f,r)\leq 1 \,,
\end{equation}
for every $ f \in \mathcal{B} $ and $0 \leq r \leq R $. We fix, then, some $f$ and $r $, and distinguish two cases. First of all, if $r \leq \modulus{a_{0}} \leq 1 $ we know from \eqref{blueraincoat} that
\[
\sum_{k=1}^{\infty}\modulus{a_{n}}r^{n}\leq r \dfrac{1-\modulus{a_{0}}^{2}}{1-r\modulus{a_{0}}} \,.
\]
On the other hand, \eqref{murdoch} and condition \ref{c:1} give
\begin{multline*}
 \inf_{\genfrac{}{}{0pt}{}{0 \leq a \leq 1}{0 \leq s\leq R}}\frac{\Upsilon(\phi_{a},s)}{\varphi(\phi_{a},s)}
 \leq \frac{\Upsilon(\phi_{\modulus{a_{0}}},s)}{\varphi(\phi_{\modulus{a_{0}}},s)}\\
 = \bigg( 1 - \modulus{a_{0}} - r \frac{1-\modulus{a_{0}}^{2}}{1-r\modulus{a_{0}}} \bigg) \frac{1}{\varphi(\phi_{\vert a_{0}\vert},r)}
 \leq   \bigg( 1 - \modulus{a_{0}} - r \frac{1-\modulus{a_{0}}^{2}}{1-r\modulus{a_{0}}} \bigg) \frac{1}{\varphi(f,r)} \,.
\end{multline*}
These two facts immediately yield \eqref{endrivell}.\\
Suppose now that $0 \leq \vert a_{0} \vert \leq r $. Condition \ref{c:2} and \eqref{murdoch} give
\begin{multline*}
\inf_{\genfrac{}{}{0pt}{}{0 \leq a \leq 1}{0 \leq s\leq R}}\frac{\Upsilon(\phi_{a},s)}{\varphi(\phi_{a},s)}
\leq    \inf_{0 \leq s \leq R}\Bigg(  \inf_{0 \leq a \leq s} \bigg( 1-a-s\frac{\sqrt{1-a^2}}{\sqrt{1-s^2}} \bigg) \frac{1}{\varphi(\phi_{a},s)} \Bigg) \\
\leq \inf_{0 \leq a \leq r} \bigg( 1-a-r\frac{\sqrt{1-a^2}}{\sqrt{1-r^2}} \bigg) \frac{1}{\varphi(\phi_{a},s)} 
\leq \bigg( 1-\vert a_{0} \vert-r\frac{\sqrt{1-\vert a_{0} \vert^2}}{\sqrt{1-r^2}} \bigg) \frac{1}{\varphi(f,r)} \,.
\end{multline*}
This, combined with condition  \ref{c:1} and \eqref{blueraincoat} show that \eqref{endrivell} holds also in this case.
\end{proof}

The factor in Theorem~\ref{th:ch_mobius}--\ref{c:2} is going to play some rôle in what follows. To handle it, we define the function $J: [0,1] \times [0, 1/3] \to \mathbb{R} $ given by
\[
J(a,r)\coloneqq 1-a-r\frac{\sqrt{1-a^2}}{\sqrt{1-r^2}} \,.
\]

\begin{remark}\label{th:sufficient} 
If, for each fixed $0\leq r \leq 1/3 $ the function given by $a \rightsquigarrow \frac{J(a,r)}{\varphi(\phi_{a}, r)} $ is decreasing in $[0,r] $, then condition \ref{c:2} in Theorem~\ref{th:ch_mobius} is trivially satisfied.\\
Let us note that the function $J_{r}(a) \coloneqq J(a,r) $ is positive and decreasing in $[0,r] $ for each $r$. Indeed, we have 
\begin{equation*}
J_{r}'(a)=-1+\sqrt{\frac{r^2}{1-r^2}} \frac{a}{\sqrt{1-a^2}} \,,
\end{equation*}
Since $ a\in [0,r]$ and $ r\in [0,1/3]$, we have
\begin{equation*}
J'(a,r)\leq -1+ \frac{r^2}{1-r^2}\leq -1+\frac{1}{3^2}<0 \,
\end{equation*}
and $J_{r} $ is decreasing. Therefore $ J(a,r)\geq J(r,r)=1-2r>0$.\\
\end{remark}

\begin{remark} \label{co:chmobiusinc}
If, on the other hand,  $\varphi $ is an increasing function with respect to $r$ (for each fixed function $f \in \mathcal{B} $), then things get easier, and its sharp constant is given by 
\begin{align*}
    \Lambda(R)&= \sup\left\{\lambda_{R}\geq 0: \sum_{n=0}^{\infty} |a_n|R^{n}+\lambda_{R}\varphi(f,R)\leq 1, \, \text{ for all } f\in \mathcal{B}\right\}\\
    &=\sup\left\{\lambda_{R}\geq 0: \lambda_{R}\leq \frac{\Upsilon(f,R)}{\varphi(f,R)}, \, \text{ for all } f\in \mathcal{B}\right\}\\
    & =\inf_{f \in \mathcal{B}} \frac{\Upsilon(f,R)}{\varphi(f,R)} \,,
\end{align*}
(recall Proposition~\ref{th:ch} for the last formulation). If $\varphi $ moreover satisfies conditions \ref{c:1} and \ref{c:2} in Theorem~\ref{th:ch_mobius}, then 
\begin{equation*}
\Lambda(R)=\inf_{0 \leq a \leq 1} \frac{\Upsilon(\phi_{a},R)}{\varphi(\phi_{a},R)}.
\end{equation*}
\end{remark}

\bigskip

We say that a function $ \varphi:\mathcal{B}\times[0,1/3]\to [0,\infty)$ is \textit{feasible} if there exists some constant $ \lambda>0$ such that for every holomorphic $ f(z)= \sum_{n=0}^{\infty}a_{n}z^n$ with $\vert f(z) \vert \leq 1 $ we have,
 \begin{equation}\label{inequality} 
     \sum_{n=0}^{\infty}|a_{n}|r^n + \lambda \varphi(f,r)\leq 1, \, \text{ for every } r \leq \frac{1}{3}.
 \end{equation}
 With this notation, Theorem~\ref{th:second} gives that the function $\varphi(f,r) = \frac{S_{r}(f)}{\pi - S_{r}(f)} $ is feasible. On the other hand, the function $\Upsilon$ defined in \eqref{upsilon} is clearly feasible. Observe also that a function $\varphi $ is feasible if and only if there is $\lambda >0 $ such that 
 \begin{equation} \label{devienne}
 \lambda \varphi (f,r)  \leq \Upsilon (f,r)
 \end{equation}
 for all $f$ and $r $. This and \eqref{pogacar} give the following characterisation of feasible functions.
 \begin{proposition}\label{prop:feasible_constant} 
 A function $ \varphi$ is feasible if and only if $ \Lambda(1/3)>0$.
 \end{proposition}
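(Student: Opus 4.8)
The plan is to derive both directions straight from the two reformulations already at hand: the formula \eqref{pogacar} for $\Lambda(R)$ and the characterisation \eqref{devienne} of feasibility. The point I would stress at the outset is that, by Bohr's inequality (Theorem~\ref{th:bohr}), one has $\Upsilon(f,r) = 1 - \sum_{n=0}^{\infty} |a_n| r^n \ge 0$ for all $f \in \mathcal{B}$ and $0 \le r \le 1/3$; consequently the constraint ``$\lambda \le \Upsilon(f,r)/\varphi(f,r)$ for all $f$ and $r$'' appearing in \eqref{pogacar} and the constraint ``$\lambda \varphi(f,r) \le \Upsilon(f,r)$ for all $f$ and $r$'' appearing in \eqref{devienne} describe exactly the same set of $\lambda \ge 0$ (pairs with $\varphi(f,r)=0$ contribute no restriction on either side, again because Bohr's inequality forces \eqref{inequality} there for every $\lambda$).

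For the implication ``feasible $\Rightarrow \Lambda(1/3) > 0$'', I would take the constant $\lambda > 0$ provided by feasibility through \eqref{devienne}. Then $\sum_{n=0}^{\infty} |a_n| r^n + \lambda \varphi(f,r) \le 1$ holds for every $f \in \mathcal{B}$ and every $0 \le r \le 1/3$, so this particular $\lambda$ belongs to the set whose supremum defines $\Lambda(1/3)$; hence $\Lambda(1/3) \ge \lambda > 0$.

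For the converse, I would use that the supremum in the definition of $\Lambda(R)$ is in fact a maximum (as noted just after Proposition~\ref{th:ch}): if $\Lambda(1/3) > 0$, then setting $\lambda = \Lambda(1/3) > 0$ gives a strictly positive constant with $\sum_{n=0}^{\infty} |a_n| r^n + \lambda \varphi(f,r) \le 1$ for all admissible $f$ and $r$, which is precisely the definition of feasibility. If one prefers not to appeal to attainment, the admissible set in \eqref{pogacar} is downward closed — since $\mu \varphi \le \lambda \varphi \le \Upsilon$ whenever $0 \le \mu \le \lambda$ — hence an interval containing $0$, so any $\lambda \in (0, \Lambda(1/3))$ already exhibits feasibility.

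I do not expect a genuine obstacle here: the statement is essentially bookkeeping built on top of \eqref{pogacar} and \eqref{devienne}. The only delicate point, and the one I would be careful to spell out, is the treatment of pairs $(f,r)$ with $\varphi(f,r) = 0$, where the quotient $\Upsilon(f,r)/\varphi(f,r)$ is undefined; this is harmless because at such pairs \eqref{inequality} reduces to Bohr's inequality and therefore holds for every value of $\lambda$.
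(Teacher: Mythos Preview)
Your proposal is correct and follows exactly the route the paper indicates: the proposition is stated as an immediate consequence of \eqref{devienne} and \eqref{pogacar}, and your write-up simply unpacks that implication in both directions. Your extra care with the case $\varphi(f,r)=0$ and with the downward-closedness of the admissible set is sound and compatible with the paper's argument.
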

 
 We give now some necessary conditions for  feasible functions.

 \begin{proposition} \label{prop:necessary} 
 Let $ \varphi$ be a feasible function and fix $0 \leq r \leq 1/3 $. Then, 
 \begin{enumerate}[(i)]
 \item\label{neces1} $ \varphi(e^{i \theta},r)=0$ for every $\theta \in \mathbb{R}$\,.
 \item\label{neces2} $ \lim_{\modulus{a_{0}} \to 1} \varphi(f,r)=0$\,.
 \item\label{neces3} $ \lim_{a \to 1} \varphi(\phi_{a},r)=0$\,.
 \end{enumerate}
 \end{proposition}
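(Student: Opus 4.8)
\medskip

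The plan is to read off all three statements directly from the reformulation of feasibility recorded in \eqref{devienne}: saying that $\varphi$ is feasible means precisely that there is some $\lambda>0$ with $\lambda\varphi(f,r)\le\Upsilon(f,r)$ for all $f\in\mathcal{B}$ and $0\le r\le 1/3$. The one elementary observation that does all the work is that, since each $\modulus{a_n}\ge 0$ and $r\ge 0$, we have $\sum_{n=0}^{\infty}\modulus{a_n}r^n\ge\modulus{a_0}$, so that
\[
0\le\lambda\varphi(f,r)\le\Upsilon(f,r)=1-\sum_{n=0}^{\infty}\modulus{a_n}r^n\le 1-\modulus{a_0}
\]
for every $f\in\mathcal{B}$ and every fixed $0\le r\le 1/3$. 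From here each item is immediate.

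For \ref{neces1} I would apply the chain above to the constant function $f\equiv e^{i\theta}$, which lies in $\mathcal{B}$ and has $a_0=e^{i\theta}$, hence $\modulus{a_0}=1$; this squeezes $\varphi(e^{i\theta},r)$ between $0$ and $0$. For \ref{neces2}, the same chain gives $0\le\varphi(f,r)\le(1-\modulus{a_0})/\lambda$ uniformly over $\mathcal{B}$, and letting $\modulus{a_0}\to 1$ forces the limit to be $0$. Finally \ref{neces3} is the special case of \ref{neces2} with $f=\phi_a$, because by \eqref{manzano} the zeroth coefficient of $\phi_a$ is $-a$, so $\modulus{a_0}=a\to 1$ as $a\to 1$; one can also argue this directly from the explicit formula \eqref{murdoch}, noting that $\Upsilon(\phi_a,r)=1-a-r\frac{1-a^2}{1-ar}\to 0$ as $a\to 1$, whence $0\le\lambda\varphi(\phi_a,r)\le\Upsilon(\phi_a,r)\to 0$.

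I do not expect any real obstacle here: the only point needing a moment of attention is the inequality $\sum_{n\ge 0}\modulus{a_n}r^n\ge\modulus{a_0}$, valid because $r\ge 0$, which is what ties $\Upsilon$ to $1-\modulus{a_0}$ and makes the bounds hold uniformly at the fixed radius $r$; beyond that the argument is a one-line squeeze in each of the three cases.
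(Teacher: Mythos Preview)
Your argument is correct and follows the same strategy as the paper: both proofs reduce each item to the feasibility inequality \eqref{devienne} together with the observation that $\Upsilon(f,r)\to 0$ as $\modulus{a_0}\to 1$. Your bound $\Upsilon(f,r)\le 1-\modulus{a_0}$, obtained simply by dropping the nonnegative tail $\sum_{n\ge 1}\modulus{a_n}r^n$, is in fact slightly cleaner than the paper's route, which passes through the coefficient estimate $\modulus{a_n}\le 1-\modulus{a_0}^2$ to control $\Upsilon$; that extra ingredient is unnecessary here, since the trivial lower bound on the series already gives the squeeze.
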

 \begin{proof}
 If $e^{i \theta} $ denotes the constant function, we clearly have $\Upsilon (e^{i \theta},r) =0$, and \eqref{devienne} gives \ref{neces1}.\\
 Fix now $\varepsilon >0 $ and take $f \in \mathcal{B} $ so that $1 - \vert a_{0} \vert < \varepsilon $. We know from \cite{bohr} (see also \cite[Lemma~8.4]{libro}) that  $ 0\leq \modulus{a_{n}}\leq 1-\modulus{a_{0}}^2$ for every $ n\geq 1$. A straightforward computation shows that $ 0\leq \modulus{a_{n}}\leq 2 \varepsilon$  and, then
 \begin{equation*}
     \Upsilon(f,r)= 1-\sum_{n=0}^{\infty} \modulus{a_n}r^{n}=(1-\modulus{a_{0}})-\sum_{n=1}^{\infty} \modulus{a_n}r^{n} < \varepsilon \Big( 1 - \frac{r}{1-r} \Big)\,.
 \end{equation*}
 This shows that $ \lim_{\modulus{a_{0}} \to 1}\Upsilon(f,r)=0 $ and again   \eqref{devienne} gives \ref{neces2}. Finally, \ref{neces3} is a particular case of \ref{neces2}.
 \end{proof}
 
 Note that this implies that no feasible function can be increasing in $ [0,1]$ (recall Remark~\ref{th:sufficient}).

\section{Analysing Bohr's inequality}

We can now address our first goal (stated in \eqref{pregunta}) of making a detailed analysis of Theorem~\ref{th:second}. In this case we have the feasible function
\[
\varphi_{0} (f, r)  =\frac{S_{r}}{\pi-S_{r}} = \frac{1}{\frac{1}{S_{r}/\pi}-1} \,,
\]
where $S_{r} $ was defined in \eqref{Sr}. Note in first place that, for each $0 \leq a \leq 1 $ and $0 \leq r \leq 1/3 $, \eqref{manzano} gives
\begin{equation*}
\frac{S_{r}(\phi_{a})}{\pi}= \sum_{n=1}^{\infty}n \modulus{a_{n}}^{2}r^{2n}=(1-a^{2})^{2}\sum_{n=1}^{\infty}n (a^{n-1})^{2}r^{2n}=\frac{r^{2}(1-a^{2})^{2}}{(1-a^{2}r^{2})^{2}}.
\end{equation*}
Then, 
\begin{equation*}
\frac{1}{S_{r}/\pi}-1
=\frac{(1-r^{2})(1-r^2 a^4)}{r^{2}(1- a^2)^2} \,,
\end{equation*}
and
\begin{equation} \label{follia}
\varphi_{0}(\phi_{a},r)=\frac{r^2}{1-r^2}\cdot \frac{(1-a^2)^2}{1-a^4r^2}\,.
\end{equation}
Also, from \eqref{murdoch} we have
\[ 
\Upsilon(\phi_{a},r)=\frac{1-a}{1-ar}\left( 1-r(1+2a) \right) \,,
\]
for every $a$ and $r $.

\begin{theorem}\label{th:second_improved}
For every fixed $ 0\leq R\leq 1/3$, and every holomorphic function $ f(z)= \sum_{n=0}^{\infty}a_{n}z^n$ such that $\vert f(z) \vert \leq 1 $ for every $z \in \mathbb{D} $ we have
\begin{equation*}
\sum_{n=0}^{\infty} \modulus{a_n}r^{n}+\Lambda(R)\left( \frac{S_{r}}{\pi-S_{r}}\right)\leq 1, \, \text{ for every } 0 \leq r \leq R\,,
\end{equation*}
where the sharp constant is given by
\begin{equation}\label{eq:constantexp} 
    \Lambda(R)=\inf_{a\in [0,1]}\frac{1-R^2}{R^2}\cdot \frac{1-a^4R^2}{(1-a)(1+a)^2}\cdot \frac{1-R(1+2a)}{1-aR}.
\end{equation}
\end{theorem}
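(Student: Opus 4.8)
The plan is to deduce Theorem~\ref{th:second_improved} from Theorem~\ref{th:ch_mobius} and Remark~\ref{co:chmobiusinc}, applied to the feasible function $\varphi_{0}(f,r)=\frac{S_{r}}{\pi-S_{r}}$. For this I must check three properties of $\varphi_{0}$ on $\mathcal{B}\times[0,1/3]$: that it is finite and nondecreasing in $r$; that it satisfies condition~\ref{c:1} of Theorem~\ref{th:ch_mobius}; and that it satisfies condition~\ref{c:2}. Granted these, Remark~\ref{co:chmobiusinc} gives $\Lambda(R)=\inf_{0\le a\le1}\frac{\Upsilon(\phi_{a},R)}{\varphi_{0}(\phi_{a},R)}$, and inserting the closed forms $\Upsilon(\phi_{a},R)=\frac{(1-a)(1-R(1+2a))}{1-aR}$ and $\varphi_{0}(\phi_{a},R)=\frac{R^{2}}{1-R^{2}}\cdot\frac{(1-a^{2})^{2}}{1-a^{4}R^{2}}$ recorded just before the statement (from \eqref{murdoch} and \eqref{follia}), together with $\frac{1-a}{(1-a^{2})^{2}}=\frac{1}{(1-a)(1+a)^{2}}$, produces exactly the formula \eqref{eq:constantexp}.

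Finiteness and monotonicity in $r$ are immediate: from \eqref{Sr}, $S_{r}/\pi=\sum_{n\ge1}n|a_{n}|^{2}r^{2n}$ and $nr^{2n}\le r^{2}$ for every $n\ge1$ when $r\le1/3$, so $S_{r}\le\pi r^{2}\sum_{n}|a_{n}|^{2}\le\pi r^{2}\le\pi/9$; moreover $S_{r}$ is nondecreasing in $r$ and $x\mapsto x/(\pi-x)$ is increasing on $[0,\pi)$, hence $\varphi_{0}(f,\cdot)$ is increasing for each fixed $f$. For condition~\ref{c:2} I would invoke the sufficient criterion of Remark~\ref{th:sufficient}, i.e. that for each fixed $r$ the map $a\mapsto\frac{J(a,r)}{\varphi_{0}(\phi_{a},r)}=\frac{1-r^{2}}{r^{2}}\cdot\frac{(1-a^{4}r^{2})J(a,r)}{(1-a^{2})^{2}}$ be decreasing on $[0,r]$; since $\frac{1-a^{4}r^{2}}{(1-a^{2})^{2}}$ is increasing while $J(\cdot,r)$ is positive and decreasing there, one checks this by showing the logarithmic derivative $\frac{4a(1-a^{2}r^{2})}{(1-a^{2})(1-a^{4}r^{2})}+\frac{J_{r}'(a)}{J(a,r)}$ is negative on $(0,r]$, an elementary one-variable estimate exploiting $0\le a\le r\le1/3$. (If that monotonicity turned out to fail somewhere one could still verify \ref{c:2} directly, comparing the infimum over $a\in[0,1]$ with the one over $a\in[0,s]$.)

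The real work is condition~\ref{c:1}, namely $\varphi_{0}(f,r)\le\varphi_{0}(\phi_{|a_{0}|},r)$; since $x\mapsto x/(\pi-x)$ is increasing this is equivalent to the extremal estimate $S_{r}(f)\le S_{r}(\phi_{|a_{0}|})$, i.e. $\sum_{n\ge1}n|a_{n}|^{2}r^{2n}\le\frac{r^{2}(1-|a_{0}|^{2})^{2}}{(1-|a_{0}|^{2}r^{2})^{2}}$ for $0\le r\le1/3$. After a rotation I may assume $a_{0}=|a_{0}|=:a\in[0,1)$, the case $a=1$ (where $f$ and $\phi_{1}$ are constants) being trivial. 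Then $f$ is subordinate to the disc automorphism $\Phi(z)=\frac{a+z}{1+az}$, since $\Phi(0)=a=f(0)$ and $\phi_{a}\circ f$ is a self-map of $\mathbb{D}$ vanishing at $0$; the Taylor coefficients of $\Phi$ have moduli $(1-a^{2})a^{n-1}$ ($n\ge1$). Rogosinski's subordination inequality then dominates the left partial sums $T_{n}:=\sum_{k=1}^{n}|a_{k}|^{2}$ by $(1-a^{2})^{2}\sum_{k=1}^{n}a^{2(k-1)}$ for every $n$. Summing by parts (with $T_{0}=0$ and $nr^{2n}T_{n}\to0$),
\[\sum_{n\ge1}n r^{2n}|a_{n}|^{2}=\sum_{n\ge1}r^{2n}\bigl(n-(n+1)r^{2}\bigr)T_{n},\]
and here is exactly where $r\le1/3$ enters: for $r\le1/\sqrt2$ one has $n-(n+1)r^{2}\ge0$ for all $n\ge1$, so the termwise domination of the $T_{n}$ passes to these weighted sums; applying the same identity to the majorant and using $(1-a^{2})^{2}\sum_{n\ge1}n r^{2n}a^{2(n-1)}=\frac{r^{2}(1-a^{2})^{2}}{(1-a^{2}r^{2})^{2}}$ gives the claim, which by the computation in the text equals $S_{r}(\phi_{a})/\pi$.

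I expect condition~\ref{c:1} to be the main obstacle: one has to recognize the subordination to the right automorphism and squeeze the weighted coefficient bound out of Rogosinski's inequality, and it is precisely this step that uses the restriction $r\le1/3$ in an essential way. The monotonicity needed for condition~\ref{c:2} is a routine (if slightly tedious) calculus check, and the final passage to \eqref{eq:constantexp} is pure algebra.
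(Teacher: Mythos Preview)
Your overall plan is the same as the paper's: apply Theorem~\ref{th:ch_mobius} together with Remark~\ref{co:chmobiusinc} to $\varphi_{0}$, then read off \eqref{eq:constantexp} from \eqref{murdoch} and \eqref{follia}. The differences lie in how conditions~\ref{c:1} and~\ref{c:2} are handled.

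For condition~\ref{c:1} you give a self-contained argument via subordination to $\Phi=\phi_{a}^{-1}$, Rogosinski's inequality on partial coefficient sums, and Abel summation (using $n-(n+1)r^{2}\ge0$ for $r\le1/\sqrt{2}$). This is correct and is essentially a proof of the estimate the paper simply cites as \cite[Lemma~1]{first}; so here you do more work than the paper but arrive at the same inequality \eqref{bruckner}.

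For condition~\ref{c:2} the situation is reversed. You reduce, as the paper does, to Remark~\ref{th:sufficient}, but then only sketch a logarithmic-derivative argument, calling it ``an elementary one-variable estimate''. Crude bounds do not settle this immediately (for instance at $a=r=1/3$ one gets roughly $1.48$ versus $2.63$, so it holds, but showing it uniformly needs care). The paper avoids this by the factorisation
\[
\frac{J(a,r)}{\varphi_{0}(\phi_{a},r)}=\frac{1-r^{2}}{r^{2}}\,\underbrace{(1-a^{4}r^{2})}_{P_{1}}\,\underbrace{\frac{1}{(1-a)(1+a)^{2}}}_{P_{2}}\,\underbrace{\Bigl(1-\sqrt{\tfrac{r^{2}}{1-r^{2}}}\sqrt{\tfrac{1+a}{1-a}}\Bigr)}_{P_{3}},
\]
obtained by pulling a factor $(1-a)$ out of $J(a,r)$, and then checking separately that each $P_{i}$ is positive and decreasing on $[0,r]$ (the only nontrivial point being that $(1-a)(1+a)^{2}$ has its local maximum at $a=1/3$). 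This is the clean idea you are missing; with it, condition~\ref{c:2} is immediate and no derivative estimates are needed. Your expectation that \ref{c:1} is the main obstacle is thus the wrong way around: in the paper \ref{c:1} is a citation, and \ref{c:2} is where the actual argument lives.
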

\begin{proof}
The result follows directly from Theorem~\ref{th:ch_mobius} (actually from Remark~\ref{co:chmobiusinc}, since $ \varphi_{0}(f,\cdot)$ is clearly increasing for each fixed $ f$), once we have checked that the function $\varphi_{0} $ satisfies the two conditions in the statement. To begin with, fix some $f \in \mathcal{B} $ and note that \cite[Lemma~1]{first} yields
\begin{equation*}
    \frac{S_{r}}{\pi}\leq \frac{r^{2} (1-\modulus{a_{0}}^2)^{2}}{(1-\modulus{a_{0}}^2r^{2})^{2}} \,,
\end{equation*} 
for every $  0<r\leq \frac{1}{\sqrt{2}}$, which gives
\begin{equation*}
    1-\frac{S_{r}}{\pi}\geq \frac{(1-r^2)(1-r^{2}\modulus{a_{0}}^4)^{2}}{(1-\modulus{a_{0}}^2r^{2})^{2}} \,.
\end{equation*} 
Hence (recall \eqref{follia})
\begin{equation} \label{bruckner}
\varphi_{0}(f,r)\leq \frac{r^{2}(1-\modulus{a_{0}}^2)^2}{(1-r^{2})(1-r^2\modulus{a_{0}}^4)} = \varphi_{0} (\phi_{\vert a_{0} \vert},r )
\end{equation}
for every $f \in \mathcal{B} $ and $1 \leq r \leq 1/3 $, showing that \ref{c:1} in Theorem~\ref{th:ch_mobius} holds.\\
We show that  condition \ref{c:2} is satisfied by using Remark~\ref{th:sufficient}. So, it is enough to see that, for each fixed $0 \leq r \leq 1/3 $, the function
\begin{equation} \label{oconnor}
a \rightsquigarrow   \frac{J(a,r)}{\varphi_{0}(\phi_{a},r)}=\frac{1-r^2}{r^2}\left( 1-a-r\sqrt{\frac{1-a^2}{1-r^2}}\right) \frac{1-a^4r^2}{(1-a^2)^2}
\end{equation}
is decreasing and positive in $[0,r] $. We start by fixing some such $r$ and rewriting the quotient as
\begin{equation} \label{haendel}
\frac{J(a,r)}{\varphi_{0}(\phi_{a},r)}=\frac{1-r^2}{r^2} \overbrace{(1-a^4r^2)}^{P_{1}(a)} \overbrace{\frac{1}{(1-a)(1+a)^2}}^{P_{2}(a)} \overbrace{\left(1-\sqrt{\frac{r^2}{1-r^2}}\sqrt{\frac{1+a}{1-a}}\right)}^{P_{3}(a)} \,.
\end{equation}
Since $ \frac{1-r^2}{r^2}>0$, we only have to check that each one of the factors $P_{i} $ for $i=1,2,3 $ is decreasing (as functions of $a$) in $[0,r] $ and, to see that they are positive, that evaluated at $a=r$ are positive. Both conditions are clearly satisfied for $P_{1} $. To see that $P_{2} $ is decreasing it is enough to check that the polynomial $(1-a)(1+a)^{2} $ is increasing in $[0,r] $. An easy exercise with the first and second derivatives gives that it has a local minimum at $a=-1 $, and a local maximum at $a=1/3 $; hence it is increasing in $[0,r] $, proving our claim. Again, it is clear that $P_{2}(r)>0 $. Finally, $P_{3} $ is clearly decreasing in $[0,r] $. As to the evaluation at $a=r $, note that
\begin{equation*}
    \sqrt{\frac{r^2}{1-r^2}}\sqrt{\frac{1+r}{1-r}}=\sqrt{\frac{r^2}{(1-r)^2}}=\sqrt{\frac{r^2}{r^2-2r+1}}<1 \,,
\end{equation*}
since $ 2r<1$. This gives $P_{3}(r)>0 $ and completes the proof.
\end{proof}

\begin{remark}
Let us study now the behaviour of the sharp constant in Theorem~\ref{th:second_improved}. To do that we consider the function
\[
  M(a,r)\coloneqq \frac{\Upsilon(\phi_{a},r)}{\varphi_{0}(\phi_{a},r)}=\frac{1-r^2}{r^2}\cdot \frac{1-a^4r^2}{(1-a)(1+a)^2}\cdot \frac{1-r(1+2a)}{1-ar} \,,
\]
and we have that $\Lambda(R) = \inf_{0 \leq a\leq1} M(a,R) $ for each $0 \leq R \leq 1/3 $. We note in first place that, taking $R=1/3 $ we get
\begin{equation*}
\Lambda(1/3) =   \inf_{0\leq a<1} M(a,1/3)=\inf_{0\leq a<1} \frac{16}{9} \frac{9-a^4}{(1+a)^2(3-a)}=\frac{16}{9} \,,
\end{equation*}
recovering in this way \cite[Theorem~10]{second}. \\
We see first that the infimum in \eqref{eq:constantexp} is actually a minimum. Note that, for each fixed $ 0 \leq r \leq 1/3$, the function $f_{r}(a)\coloneqq M(a,r) $ is clearly continuous in $[0,1) $ and satisfies $ \lim_{a\to 1^{-} } f_{r}(a) = + \infty$. Thus there is  $ a^{*}(r) \in [0,1)$ so that 
\[
    \inf_{0\leq a<1} M(a,r)=M(a^*(r),r) \,.
\] 
Elementary computations show that $ f'_{r}(0)= \frac{r^4-1}{r^2} < 0$. Hence the function is decreasing at $ 0$, which gives that, in fact $ a^{*}(r) \in (0,1)$ and it is a local minimum (so, a critical point of $ f_{r}$). Again, elementary (but tedious) computations show that the critical points of $f_{r}$ in $ (0,1)$ are the roots of the polynomial
\begin{multline*}
p_{r}(x) \coloneqq 2 r^4 x^7 +2  (r-2) r^3 x^6+ r^{2} \left(-7 r^2-4 r+1\right) x^5+ r^{2} \left(-3 r^2+12 r+1\right) x^4\\
+2  r^2 (2 r+1) x^3 +2 r (r-4) x^2- \left(r^2-3\right) x -r^2-1 \,.
\end{multline*}
Note that, since $ p_{0}(x)= 3x-1$, we have $ \lim_{r \to 0} a^{*}(r)=\frac{1}{3}$.\\
Let us give now lower and upper bounds for these sharp constants´. For $ 0 \leq a \leq 1$, $ 0 \leq r \leq 1/3$ and $ \lambda \geq 0$ consider the functions
\begin{align*}
C_{1}(a,r, \lambda )& \coloneqq a+r \frac{1-a^2}{1-ar}+\lambda \frac{r^2}{1-r^2} \frac{(1-a^2)^2}{1-a^4 r^2} \, ,\\
C_{2}(a,r,\lambda) &\coloneqq a+r \sqrt{\frac{1-a^2}{1-r^{2}}}+\lambda \frac{r^2}{1-r^2} \frac{(1-a^2)^2}{1-a^4r^2}\,.
\end{align*}
Then , as a consequence of  \eqref{blueraincoat} we have 
\begin{equation*}
    \sum_{n=0}^{\infty} \modulus{a_n}r^{n}+\lambda \left( \frac{S_{r}}{\pi-S_{r}}\right) \leq 
    \left\{
    \begin{aligned}
    C_{1}(\vert a_{0} \vert,r,\lambda), \text{ if }  0 \leq \vert a_{0} \vert \leq r,\\
    C_{2}(\vert a_{0} \vert,r,\lambda), \text{ if }  r \leq \vert a_{0} \vert \leq 1 \,.
    \end{aligned}\right.
\end{equation*}
Each one of these is increasing on $ r$ (for fixed $ a$ and $ \lambda$), and we know (see again \cite{second}) that $ C_{i} (a, 1/3 , 16/9) \leq 1$ for every $0 \leq  a \leq 1$ and $0 \leq  r \leq 1/3$ (and $ i=1,2$). Consider $ \lambda_{0}(r) = \frac{2(1-r^2)}{9r^{2}}$ and note that, for each $ 0 \leq r \leq 1/3$ we have
\[
C_{i} (a, r, \lambda_{0}(r)) \leq C_{i} (a, 1/3, \lambda_{0}(r))  = C_{i}(a, 1/3, 16/9 ) \leq 1 \,.
\]
This shows that \eqref{pregunta} holds for $ \lambda_{0}(r)$ and then, by the definition of sharp constant
\[
\frac{2}{9}\left(\frac{1-R^2}{R^2}\right) \leq \Lambda(R) \,.
\]
For an upper bound let us simply observe that
\begin{multline*}
\inf_{0\leq a<1} M(a,R)\leq M(1/3,R)
= \frac{27}{32} \left(\frac{1-R^2}{R^2}\right)\frac{(1-\frac{R^2}{81})(1-\frac{5R}{3})}{1-\frac{R}{3}} \\
= \frac{1}{96}   \left(\frac{1-R^2}{R^2}\right) \left( \frac{(81-R^{2})(3-5R)}{(3-R)} \right)
\end{multline*}
Taking all these into account we have
\[ 
\frac{2}{9}\left(\frac{1-R^2}{R^2}\right) \leq \Lambda(R) \leq 
 \frac{1}{96}  \left( \frac{(81-R^{2})(3-5R)}{(3-R)} \right) \left(\frac{1-R^2}{R^2}\right)  \,.
\]
To give a better idea, we include here the graphs of the functions defining the lower and upper bounds, for the intervals $ [0,1/3] $ and $ [1/4,1/3]$ respectively.\\

\begin{center}
\begin{tabular}{cc}
\includegraphics[width=0.4\textwidth]{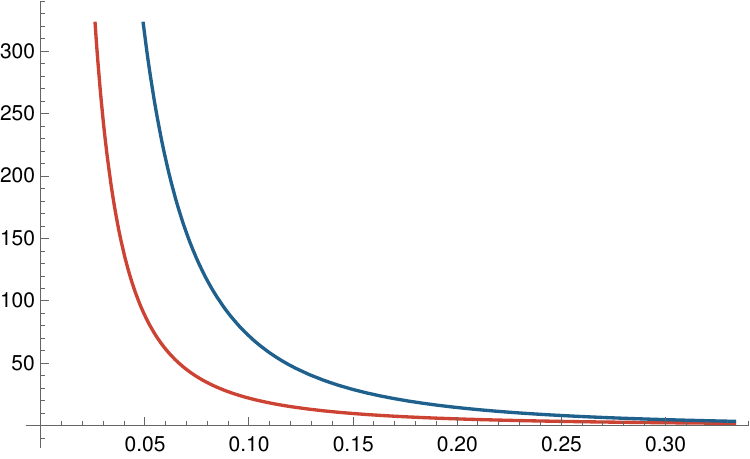}
 &
\includegraphics[width=0.4\textwidth]{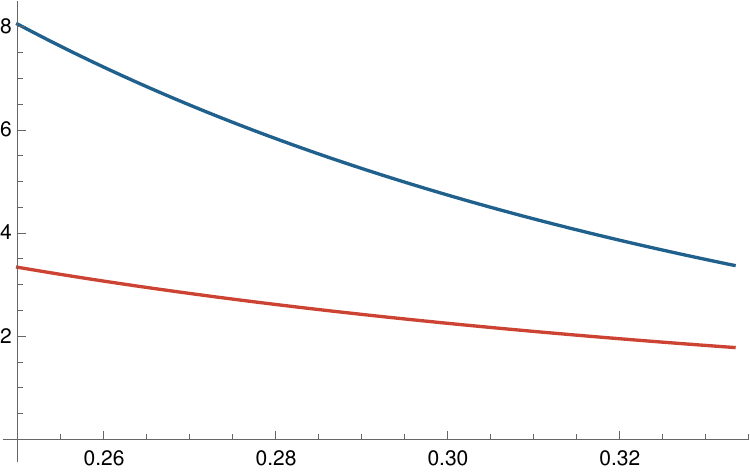}
\end{tabular}
\end{center}
\end{remark}

\begin{remark} \label{th:second_improved_g}
If $ g$ is a bounded, positive function on $ [0,1]$ we may consider the function
\[
\varphi (f,r) = g\big(\vert f(0) \vert\big) \varphi_{0} (f,r)  = g\big(\vert a_{0} \vert\big) \frac{S_{r}}{\pi - S_{r}}
\]
for $ f(z) = \sum_{n=0}^{\infty} a_{n} z^{n}$ in $ \mathcal{B}$ and $ 0<r<1/3$. Since $ g$  is positive,  \eqref{bruckner} yields
\[
\varphi (f, r) \leq \varphi (\phi_{\vert a_{0} \vert} ,r)
\]
for every $ f$  and $ r$. Now, if $ g$ is such that, for each fixed $0\leq r \leq 1/3 $ the function given by 
\begin{equation} \label{mas}
a \rightsquigarrow \frac{J(a,r)}{g(a) \varphi_{0}(\phi_{a}, r)}
= \frac{1-r^2}{r^2}\cdot \frac{1}{g(a)} \cdot \frac{1-a^4r^2}{(1-a^2)^2} \cdot  \Big( 1-a-r\frac{\sqrt{1-a^2}}{\sqrt{1-r^2}}  \Big)
\end{equation}
is decreasing in $[0,r] $, then we can apply  Theorem~\ref{th:ch_mobius} to have that 
\begin{equation*}
    \sum_{n=0}^{\infty} \modulus{a_n}r^{n}+\Lambda(R)\left( g(\modulus{a_{0}}) \frac{S_{r}}{\pi-S_{r}}\right)\leq 1\,, 
\end{equation*}
 for every $ f \in \mathcal{B}$ all $ 0 \leq R \leq 1/3$ and $ 0 < r \leq R$. Here the sharp constant is given by
\begin{equation} \label{weinenklagen}
    \Lambda(R)
    = \bigg(\frac{1-R^2}{R^2}\bigg) \inf_{0 \leq a \leq 1} \bigg(\frac{1}{g(a)}\cdot \frac{1-a^4R^2}{(1-a)(1+a)^2}\cdot \frac{1-R(1+2a)}{1-aR}\bigg) \,.
\end{equation}
Condition in \eqref{mas} might not be easy to check. However, recall that we already saw in the proof of Theorem~\ref{th:second_improved} that $ a \rightsquigarrow  \frac{J(a,r)}{\varphi_{0}(\phi_{a}, r)}$ is decreasing as a function of $ a$ (see \eqref{oconnor}). With this,  if $ g$  is increasing, then the function in \eqref{mas} is clearly decreasing and we have \eqref{weinenklagen}.
\end{remark}

The challenge now is to find functions satisfying \eqref{mas} that improves Theorem~\ref{th:second}, in the sense that, keeping the factor  $\frac{S_{r}}{\pi- S_{r}}$ we can add a term which for each function is bigger than $ \frac{16}{9}$.

\begin{example}
We take the function  $ g\colon [0,1] \to \mathbb{R}$ given by 
\begin{equation*}
g(a)=\frac{(9+a^4)}{(2+a^{2})(3-a)} \,,
\end{equation*}
which is clearly positive. We need to see that \eqref{mas} is satisfied. To do that let us note that $ (3-a)$ is decreasing as a function of $ a$, and so also is $  \Big( 1-a-r\frac{\sqrt{1-a^2}}{\sqrt{1-r^2}}  \Big) \Big( \frac{2+a^{2}}{9-a^{2}} \Big)$. Taking this  and \eqref{haendel} into account, we have that \eqref{mas} is indeed satisfied. Then by \eqref{weinenklagen} we have
\begin{equation*}
\Lambda(1/3) = \frac{16}{9} \inf_{0<a<1}  \frac{ \left(a^2+2\right) \left(a^4-9\right)}{ (a+1)^2 \left(a^2-9\right)} = \frac{4}{3} \,,
\end{equation*}
and
\begin{equation*}
\sum_{n=0}^{\infty} \modulus{a_n}r^{n}
+ \frac{4}{3} \cdot \frac{(9+ \vert a_{0} \vert^4)}{(2+\vert a_{0} \vert^{2})(3-\vert a_{0} \vert)}
\cdot  \frac{S_{r}}{\pi-S_{r}} \leq 1\,, 
\end{equation*}
for every $ f \in \mathcal{B}$ and all $ 0 \leq r \leq 1/3$. Note finally that $ g(a) \geq g(1) = \frac{4}{3}$ for every $ 0 \leq a \leq 1$. Hence, the factor multiplying $\frac{S_{r}}{\pi-S_{r}}$ is bigger than or equal to $\frac{16}{9}$ for every function, improving in this way Theorem~\ref{th:second}.
\end{example}

Theorem~\ref{th:ch_mobius} provides a way to find functions that produce an improved version of Bohr's inequality. As we already noted, condition \ref{c:2} in Theorem~\ref{th:ch_mobius} is somewhat difficult to handle, so in Remark~\ref{th:sufficient} we gave an easier condition. On the other hand, we look for functions that are as big as possible, so that the extra factor  that we are adding in \eqref{biber} is as big as possible. We give now a step in this direction.  Consider the  function $ \Psi : [0,1] \times [0,1/3] \to \mathbb{R}$ given by
\[ 
\Psi (a,r) \coloneqq \left\{
\begin{array}{ll}
1 - a - r \dfrac{1-a^2}{1-ar} \, , & \text{ if }r \leq a \leq 1 \\ \\
 1-a-r\dfrac{\sqrt{1-a^2}}{\sqrt{1-r^2}}  \, ,& \text{ if } 0 \leq a \leq r 
\end{array}
\right.
\]
and define $ \psi \colon \mathcal{B} \times [0,1/3] \to \mathbb{R}$ by
\[ 
\psi (f, r) \coloneqq \Psi (\vert f(0) \vert , r) \,.
 \]
 This function obviously satisfies both conditions in Theorem~\ref{th:ch_mobius}; hence its sharp constant is $ 1$ and we have the following strengthening of \eqref{blueraincoat}.
 
\begin{theorem} \label{final}
For every holomorphic function $ f(z)= \sum_{n=0}^{\infty}a_{n}z^n$ such that $\vert f(z) \vert \leq 1 $ for every $z \in \mathbb{D} $, and $ 0 \leq r \leq 1/3$ we have
\begin{equation*}
\sum_{n=0}^{\infty} \modulus{a_n}r^{n} + \Big( 1 - \modulus{a_{0}} - r \dfrac{1- \modulus{a_{0}}^2}{1-\modulus{a_{0}}r} \Big) \leq 1 \,, \text{ if } r \leq \vert a_{0} \vert  \leq 1
\end{equation*}
and
\begin{equation*}
\sum_{n=0}^{\infty} \modulus{a_n}r^{n} + \Big(  1-\modulus{a_{0}}-r\dfrac{\sqrt{1-\modulus{a_{0}}^2}}{\sqrt{1-r^2}}  \Big) \leq 1 \,, \text{ if } 0 \leq \vert a_{0} \vert  < r \,.
\end{equation*}
Moreover, the inequalities are optimal, in the sense that there is no $ \lambda_{r} >1$ so that $ \sum_{n=0}^{\infty} \modulus{a_n}r^{n} + \lambda_{r} \psi(f,r) \leq 1$ for every $ f \in \mathcal{B}$.
\end{theorem}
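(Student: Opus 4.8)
The plan is to apply Theorem~\ref{th:ch_mobius} to $\psi$, the whole point being that along the M\"obius family the quotient $\Upsilon/\psi$ never drops below $1$. First I would record the facts I will lean on: since $\phi_{a}(0)=-a$ one has $\psi(\phi_{a},r)=\Psi(a,r)$, and, comparing with \eqref{murdoch} and with the function $J$ defined right after Theorem~\ref{th:ch_mobius}, $\Psi(a,r)=\Upsilon(\phi_{a},r)$ on the region $r\le a\le 1$ while $\Psi(a,r)=J(a,r)$ on $0\le a\le r$. As a preliminary I would also check that $\psi$ takes values in $[0,\infty)$, so that Theorem~\ref{th:ch_mobius} is applicable: on $[0,r]$ this is the positivity of $J_{r}$ from Remark~\ref{th:sufficient}, and on $[r,1]$ it reduces to $r(1+2a)\le 1$, which holds because $r\le 1/3$ and $a\le 1$.

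Next I would verify the two hypotheses of Theorem~\ref{th:ch_mobius}. Condition \ref{c:1} holds with equality, because $\psi(f,r)$ depends on $f$ only through $|a_{0}|=|f(0)|$ and $|\phi_{|a_{0}|}(0)|=|a_{0}|$, so $\psi(f,r)=\psi(\phi_{|a_{0}|},r)$. For condition \ref{c:2} I would show both sides equal $1$. On the right-hand side, for $0\le a\le s$ we have $\psi(\phi_{a},s)=J(a,s)=1-a-s\sqrt{1-a^{2}}/\sqrt{1-s^{2}}$, so each bracketed quotient is identically $1$ and the right-hand infimum is $1$. On the left-hand side I would split the domain: if $s\le a\le 1$ then $\psi(\phi_{a},s)=\Upsilon(\phi_{a},s)$ and the quotient is $1$; if $0\le a\le s$, the elementary inequality $(a-s)^{2}\ge 0$, equivalently $\sqrt{(1-a^{2})(1-s^{2})}\le 1-as$, gives $s\,\frac{1-a^{2}}{1-as}\le s\,\frac{\sqrt{1-a^{2}}}{\sqrt{1-s^{2}}}$, hence $\Upsilon(\phi_{a},s)\ge J(a,s)=\psi(\phi_{a},s)$ and the quotient is $\ge 1$. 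So the left-hand infimum is also $1$, and \ref{c:2} reads $1\le 1$.

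Theorem~\ref{th:ch_mobius} then yields $\Lambda(R)=\inf_{0\le a\le 1,\,0\le r\le R}\Upsilon(\phi_{a},r)/\psi(\phi_{a},r)$ (the value at $a=1$ being a removable $0/0$), and the very case split of the previous paragraph shows this infimum equals $1$ for every $0\le R\le 1/3$. Since the supremum in the definition of $\Lambda$ is attained, taking $R=1/3$ gives $\sum_{n=0}^{\infty}|a_{n}|r^{n}+\psi(f,r)\le 1$ for all $f\in\mathcal{B}$ and $0\le r\le 1/3$; writing out $\psi(f,r)=\Psi(|a_{0}|,r)$ in its two regimes $r\le|a_{0}|\le 1$ and $0\le|a_{0}|<r$ produces exactly the two displayed inequalities.

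For the optimality, fix $0\le r\le 1/3$ and choose any $a$ with $r\le a<1$ (possible since $r<1$). Then $\psi(\phi_{a},r)=\Upsilon(\phi_{a},r)=\frac{1-a}{1-ar}\bigl(1-r(1+2a)\bigr)>0$, the positivity being strict because $a<1$ forces $1+2a<3$ and hence $r(1+2a)<1$; and by the definition \eqref{upsilon} of $\Upsilon$, $\sum_{n=0}^{\infty}|(\phi_{a})_{n}|r^{n}+\psi(\phi_{a},r)=\sum_{n=0}^{\infty}|(\phi_{a})_{n}|r^{n}+\Upsilon(\phi_{a},r)=1$. Hence for any $\lambda_{r}>1$ we get $\sum_{n=0}^{\infty}|(\phi_{a})_{n}|r^{n}+\lambda_{r}\psi(\phi_{a},r)=1+(\lambda_{r}-1)\psi(\phi_{a},r)>1$, so no $\lambda_{r}>1$ can work. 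I do not expect a genuine obstacle here: the only care needed is the bookkeeping between the two branches of $\Psi$ and keeping all denominators positive — both controlled by $r\le 1/3$ together with Remark~\ref{th:sufficient} — the substantive inequality being merely $(a-s)^{2}\ge 0$.
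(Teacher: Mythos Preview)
Your proof is correct and follows the same route as the paper: apply Theorem~\ref{th:ch_mobius} to $\psi$, observe that its sharp constant equals $1$, and read off both inequalities and the optimality from that. The paper simply asserts that the two hypotheses of Theorem~\ref{th:ch_mobius} are ``obviously'' satisfied and that the sharp constant is $1$; you have filled in exactly those details (the $(a-s)^{2}\ge 0$ computation for condition~\ref{c:2} and the explicit witness $\phi_{a}$ with $r\le a<1$ for optimality), so there is no substantive difference in approach.
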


\begin{remark}
Let us briefly compare Theorem~\ref{final} with Theorem~\ref{th:second}, and see that $ \frac{16}{9} \varphi_{0}(f,r) \leq \psi (f, r)$ for every $ f \in \mathcal{B}$ and $ 0 \leq r \leq 1/3$. Fix $ 0 \leq r \leq 1/3$  and pick some $ f \in \mathcal{B}$. We know from the proof of Theorem~\ref{th:second_improved} that $ \varphi_{0} (f, r) \leq \varphi_{0} (\phi_{\vert a_{0} \vert}, r)$. Now we have to consider two cases. Suppose first that $ 0 \leq \vert a_{0} \vert  < r$. We know (recall again the proof of Theorem~\ref{th:second_improved}) that the function $ a \rightsquigarrow \frac{J(a,r)}{\varphi_{0}(\phi_{a}, r)}$ is decreasing in $ [0,r]$ and, then,
\[ 
\frac{J(a,r)}{\varphi_{0}(\phi_{a}, r)} \geq \frac{J(r,r)}{\varphi_{0}(\phi_{r}, r)}
= \frac{(1-2r)(1-r^{6})}{r^{2}(1-r^{2})} =  \frac{(1-2r)}{r^{2}} (1+r^{2}+r^{4}) \geq \frac{91}{27} \,.
\]
This immediately gives 
\[ 
 \Big(  1-\modulus{a_{0}}-r\dfrac{\sqrt{1-\modulus{a_{0}}^2}}{\sqrt{1-r^2}}  \Big) 
 \geq \frac{16}{9} \frac{S_{r}}{\pi - S_{r}}
\]
whenever $ 0 \leq \vert a_{0} \vert  < r$.\\
Suppose now that $ r \leq \vert a_{0} \vert  \leq  1$. In this case we have as a straightforward consequence of Proposition~\ref{th:ch} and the fact that the optimal constant for $ \varphi_{0}$ is $ \Lambda_{\varphi_{0}}(1/3)= \frac{16}{9}$, that
\[ 
 \Big(  1-\modulus{a_{0}}-r\dfrac{1-\modulus{a_{0}}^2}{1-r^2}  \Big) 
 \geq \frac{16}{9} \frac{S_{r}}{\pi - S_{r}} \,.
\]
In order to get a better feeling of what is going on, we show some plots of the functions $ \psi$ (in red) and $ \varphi_{0}$ (in blue), evaluated at Möbius transforms $ \phi_{a}$ (which, as we have seen, play a fundamental r\^ole).\\

\begin{center}
\begin{tabular}{p{.5\textwidth} p{.5\textwidth}}
\includegraphics[width=0.45\textwidth]{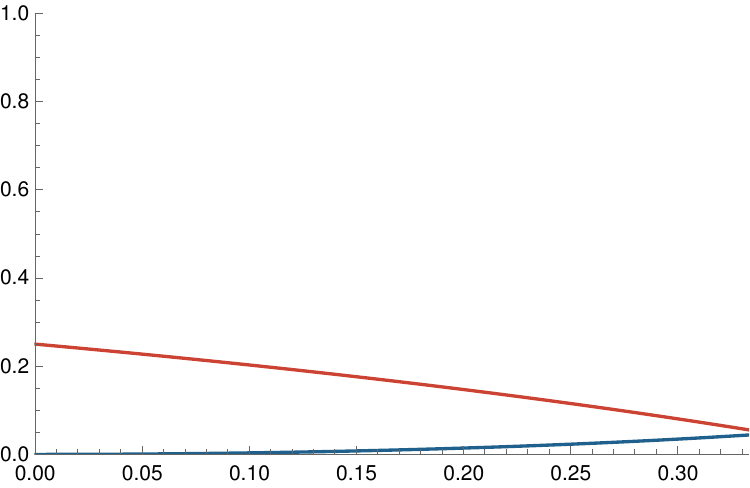}
\newline
{\small $ \psi (\phi_{a}, r)$ and $ \varphi_{0}(\phi_{a},r)$ for $ a=3/4$}
 &
\includegraphics[width=0.45\textwidth]{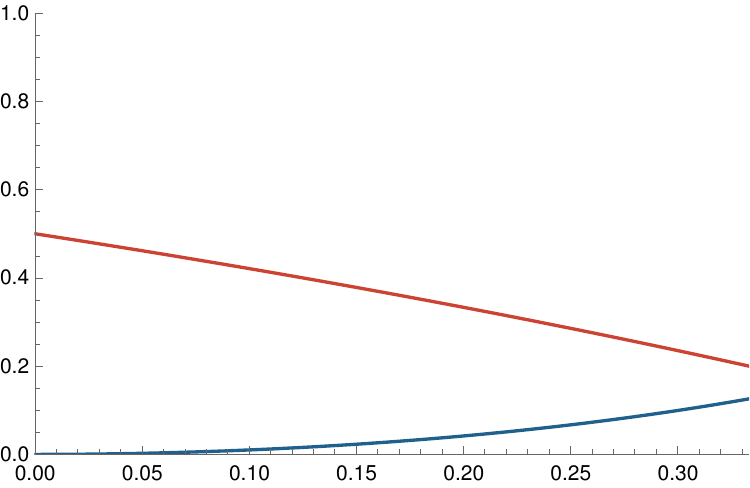}
\newline
{\small $ \psi (\phi_{a}, r)$ and $ \varphi_{0}(\phi_{a},r)$ for $ a=1/2$} \\ & \\

\includegraphics[width=0.45\textwidth]{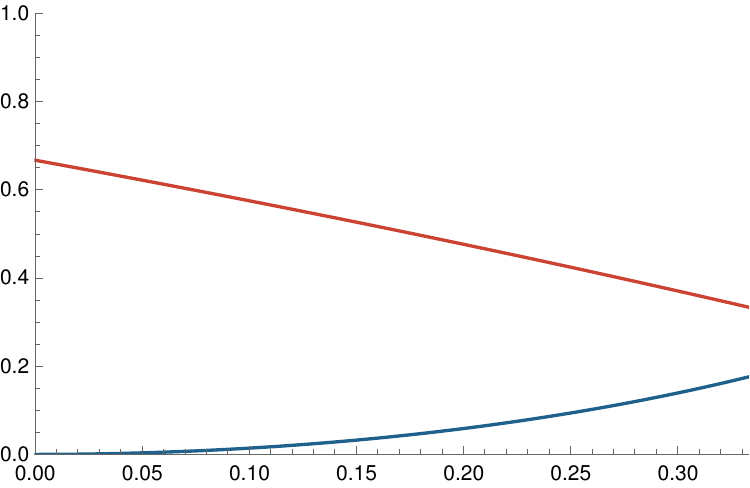}
\newline
{\small $ \psi (\phi_{a}, r)$ and $ \varphi_{0}(\phi_{a},r)$ for $ a=1/3$}
 &
\includegraphics[width=0.45\textwidth]{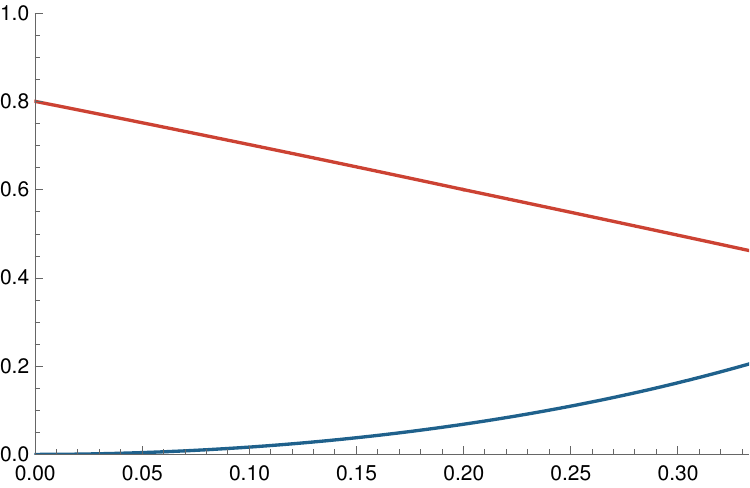}
\newline
{\small $ \psi (\phi_{a}, r)$ and $ \varphi_{0}(\phi_{a},r)$ for $ a=1/5$} \\
\end{tabular}
\end{center}

\begin{center}
\begin{tabular}{p{.5\textwidth} p{.5\textwidth}}
\includegraphics[width=0.45\textwidth]{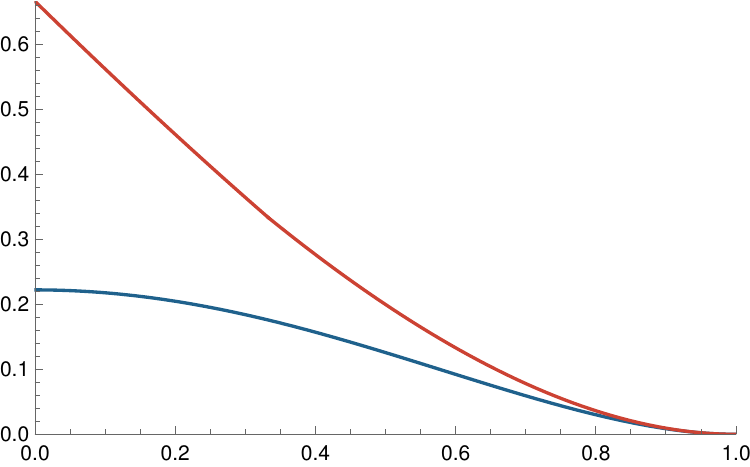}
\newline
{\small $ \psi (\phi_{a}, 1/3)$ and $ \varphi_{0}(\phi_{a},1/3)$}
 &
\includegraphics[width=0.45\textwidth]{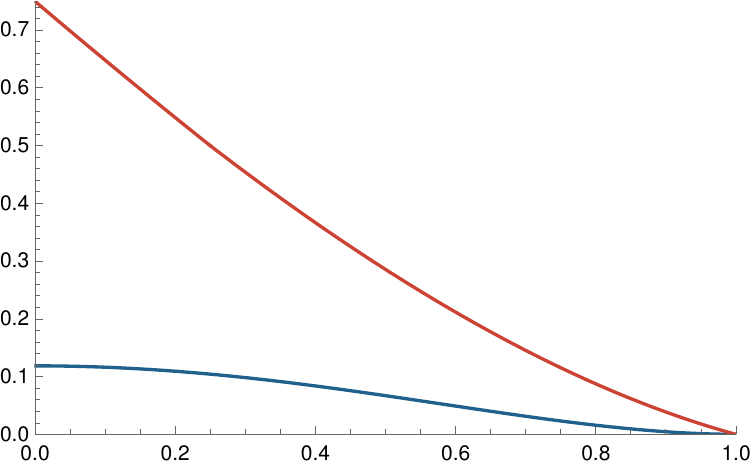}
\newline
{\small $ \psi (\phi_{a}, 1/4)$ and $ \varphi_{0}(\phi_{a},1/4)$} \\ & \\

\includegraphics[width=0.45\textwidth]{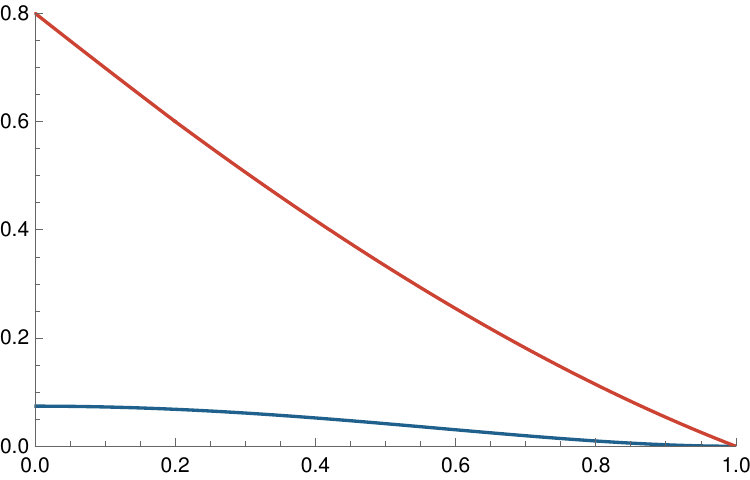}
\newline
{\small $ \psi (\phi_{a}, 1/5)$ and $ \varphi_{0}(\phi_{a},1/5)$}
 &
\includegraphics[width=0.45\textwidth]{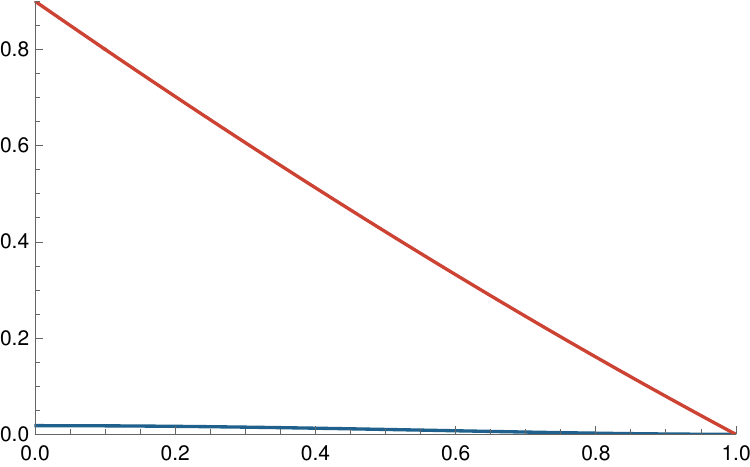}
\newline
{\small $ \psi (\phi_{a}, 1/10)$ and $ \varphi_{0}(\phi_{a},1/10)$} 
\end{tabular}
\end{center}
\end{remark}

\end{document}